\numberwithin{equation}{section}
\newcommand{\E}{\mathrm{E}}
\renewcommand{\P}{\mathrm{P}}
\newcommand{\R}{\mathbb{R}}
\newtheorem{thm}{Theorem}[section]
\newtheorem{prop}[thm]{Proposition}
\newtheorem{hyp}[thm]{Hypothesis}
\def\1{{\rm l}\hskip -0.21truecm 1}
\begin{document}
\title{Moment bounds for some fractional stochastic heat equations on the  ball}
 \date{\today}

\author{Eulalia Nualart}

\address{Universitat Pompeu Fabra and Barcelona Graduate School of Economics, Ram\'on Trias Fargas 25-27, 08005
Barcelona, Spain.}
\email{eulalia.nualart@upf.edu}
\thanks{Research supported in part by the European Union programme FP7-PEOPLE-2012-CIG under grant agreement 333938}

\subjclass[2010]{Primary: 60H15; Secondary: 82B44}

\date{\today}

\keywords{Stochastic partial differential equations.}

\begin{abstract}
In this paper, we obtain upper and lower bounds for the moments of the solution to a class of fractional stochastic heat equations on the ball driven by a Gaussian noise which is white in time, and with a spatial correlation in space of Riesz kernel type. We also consider the space-time white noise case on an interval.
\end{abstract}

\maketitle

\section{Introduction}

Consider the fractional stochastic heat equation on the unit ball $D=\{y \in \R^d: \vert y \vert < 1\}$,  $d \geq 1$, with zero Dirichlet
boundary conditions:
\begin{equation}\label{eq10}
\begin{cases}
\partial_t u_t(x)=-(-\Delta)^{\alpha/2}u_t(x)+\lambda \sigma(u_t(x)) \dot{W}(t,\,x)\qquad  x \in D, &t>0,\\
u_t(x)=0 \qquad x \in D^c, \; \;t>0, &
\end{cases}
\end{equation}
and the  initial condition is a measurable and bounded function $u_0:D \rightarrow \R_+$.
The operator $-(-\Delta)^{\alpha/2}$, where $0<\alpha \leq 2$, is the $L^2$-generator of a 
symmetric $\alpha$-stable process  killed when exiting the ball $D$. 
The coefficient $\sigma:\R\rightarrow \R$ is a globally Lipschitz function.
The Gaussian noise $\dot{W}(t,x)$ is white in time and coloured in space,
that is,  
\begin{equation} \label{f1}
\E \left( \dot{W}(t,x) \dot{W}(s,y)\right)=\delta_0(t-s) f(x-y),
\end{equation}
where $f:\R^d \rightarrow \R_+$ is a nonnegative definite (generalized) function whose Fourier transform $\hat{f}=\mu$ is a tempered measure.
Finally, the parameter $\lambda>0$ measures the level of the noise.

Following Walsh \cite{Walsh}, we define the mild solution to equation (\ref{eq10}) as the random field $u=\left\{ u_t(x)\right\}_{t> 0, x \in D}$
satisfying
 \begin{equation}\label{mild:coloured}
u_t(x)=
\int_{D} u_0(y)p_D(t,x,y)\,d y+ \lambda \int_{D}\int_0^t p_D(t-s,\,x,\,y) \sigma(u_s(y)) W(d s,d y),
\end{equation}
where $p_D(t,x,y)$ denotes the Dirichlet fractional heat kernel on $D$,
and the stochastic integral is understood
in an extended It\^o sense.  

Following Dalang \cite{Dalang}, it is well-known (see also \cite[Appendix]{LTF17} and \cite{minicourse}), that if the spectral measure $\mu$ satisfies that
\begin{equation} \label{g}
\int_{\R^d} \frac{\mu(d\xi)}{1+\vert \xi \vert^{\alpha}} < \infty,
\end{equation}
then there exists a unique 
random field solution $u$ to equation (\ref{mild:coloured}). Moreover, for all $p \geq 2$ and $T>0$, 
$$
\sup_{t \in [0,T], x \in D} \E\vert u_t(x) \vert^p <\infty.
$$

Examples of spatial correlations satisfying (\ref{g}) are:
\begin{enumerate}
\item[\textnormal{1.}] The Riesz kernel $f(x)=\vert x \vert^{-\beta}$, $0<\beta<d$. In this case, $\mu(d\xi)=c \vert \xi \vert^{-(d-\beta)}d\xi$ and it is easy to check that condition (\ref{g}) holds whenever $\beta < \alpha$.

\item[\textnormal{2.}] The fractional kernel $f(x)=\prod_{i=1}^d \vert x_i \vert^{2H_i-2}$, where $H_i \in (\frac12, 1)$ for $i=1,...,d$. In this case, $\mu(d\xi)=c \prod_{i=1}^d\vert \xi_i \vert^{1-2H_i} d \xi$ and condition (\ref{g}) holds whenever $\sum_{i=1}^d H_i>d-\frac{\alpha}{2}$.

\item[\textnormal{3.}] The Bessel kernel $f(x)=\int_0^{\infty} y^{\frac{\eta-d}{2}} e^{-y} e^{-\frac{\vert x \vert^2}{4y}} dy$. In this case, $\mu(d\xi)=c (1+\vert \xi \vert^2)^{-\eta/2} d\xi$ and condition (\ref{g}) holds whenever $\eta> d-\alpha$.

\item[\textnormal{4.}] The space-time white noise case $f=\delta_0$. In this case, $\mu(d\xi)=d\xi$, and
(\ref{g}) is only satisfied when $\alpha>d$, that is, $d=1$ and $1<\alpha \leq 2$.
\end{enumerate}

Recall that the fractional heat kernel $p_D(t,x,y)$ has spectral decomposition
$$
p_D(t,x,y)=\sum_{n=1}^{\infty} e^{-\mu_n t} \Phi_n(x) \Phi_n(y)\quad\text{for all}\quad x,\,y \in D,\quad t>0,
$$
where $\{\Phi_n\}_{n \geq 1}$ is an orthonormal basis of $L^2(D)$ and $0<\mu_1<\mu_2\leq \mu_3 \leq \cdots$ is a sequence of positive numbers, such that for every $n \geq 1$
\begin{equation}\label{eq:e}
\begin{cases}
-(-\Delta)^{\alpha/2} \Phi_n(x)=-\mu_n \Phi_n(x) \quad & x \in D,\\
\Phi_n(x)=0 \quad &x \in D^c.
\end{cases}
\end{equation}
Some properties of these families of eigenvalues and eigenfunctions are the following. From \cite[Theorem 2.3]{Blu}, there exist positive constants $c_1$ and $c_2$ such that for every $n \geq 1$,
$$
c_1 n^{\alpha/d} \leq \mu_n \leq c_2 n^{\alpha/d}.
$$
Moreover, from \cite[Theorem 4.2]{Chen2}, there exists $c>1$ such that
for all $x \in D$,
\begin{equation} \label{est}
 c^{-1} (1-\vert x\vert)^{\alpha/2}\leq \Phi_1(x) \leq c (1-\vert x\vert)^{\alpha/2},
\end{equation}
In the case that $d=1$, $\alpha=2$, and $D=(-1,1)$, we have $\Phi_n(x)= \sin(\frac{n\pi x}{2})$ and $\mu_n=(\frac{n\pi}{2})^2$. 

The aim of this paper is to obtain upper and lower bounds in terms of $t>0$ and $\lambda>0$ for the moments of the solution to equation (\ref{mild:coloured}). For this, we need some further assumptions.
We consider the following class of covariances that generalizes the Riesz kernel.
\begin{hyp} \label{h0}
There exist  positive constants $c_1,c_2$ and $0<\beta < \alpha \wedge d$ such that for all $x \in \R^d$,
$$
c_1 \vert x \vert^{-\beta} \leq f(x) \leq c_2 \vert x \vert^{-\beta}.
$$
\end{hyp}
Since we are interested in upper and lower bounds for the moments, we also need the following assumption on the coefficient $\sigma$.
\begin{hyp} \label{h1}
There exist positive constants $l_\sigma$ and $L_\sigma$ such that for all $x \in \R^d$,
\begin{equation*} 
l_\sigma|x|\leq  |\sigma(x)| \leq L_\sigma|x|.
\end{equation*} 
\end{hyp} 
Finally, in order to obtain the lower bound, we need the following assumption on the initial condition.
\begin{hyp} \label{h2}
There exists $\epsilon \in (0,\frac12)$ such that
$$
\inf_{x \in D_{\epsilon}} u_0(x) >0,
$$
where $D_{\epsilon}=\{y \in \R^d: \vert y \vert \leq 1-\epsilon\}$.
\end{hyp}
Essentially Hypothesis \ref{h2} says that there exists a closed set of positive measure inside $D$ where the initial condition stays positive.
Hypotheses \ref{h1} and \ref{h2} are usual when studying intermittency properties of SPDEs.

We are now ready to state the main result of this paper.
\begin{thm} \label{12}
Assume Hypothesis \ref{h2}. 
\begin{enumerate}
\item[\textnormal{a)}] If $f$  satisfies Hypothesis \ref{h0} and $\sigma(x)=x$, then for all $p \geq 2$, there exist positive constants $c_1$, $\overline{c}_1$, $c_2(\epsilon)$, $\overline{c}_2( \epsilon)$ such that for all $\lambda>0$,
\begin{equation*}
\overline{c}_2^p e^{p t\left(c_2\lambda^{\frac{2\alpha}{\alpha-\beta}} -\mu_1 \right)} \leq  \inf_{x \in D_{\epsilon}}\E|u_t(x)|^p\leq \sup_{x \in D}\E|u_t(x)|^p\leq \overline{c}_1^p e^{p t\left(c_1 p^{\frac{\alpha}{\alpha-\beta}} \lambda^{\frac{2\alpha}{\alpha-\beta}} -\mu_1 \right)}.
\end{equation*}

\item[\textnormal{b)}] If $f=\delta_0$ and $\sigma$ satisfies Hypothesis \ref{h1}, 
then for all $p \geq 2$ and $\delta>0$, there exist positive constants $c_1(L_{\sigma})$, $\overline{c}_1(\delta)$, $c_2(\epsilon,\ell_{\sigma})$, $\overline{c}_2( \epsilon)$ such that for all $\lambda>0$,
\begin{equation*}
\overline{c}_2^p e^{p t\left(c_2\lambda^{\frac{2\alpha}{\alpha-1}} -\mu_1 \right)} \leq  \inf_{x \in D_{\epsilon}}\E|u_t(x)|^p\leq \sup_{x \in D}\E|u_t(x)|^p\leq \overline{c}_1^p e^{p t\left(c_1 z_p^{\frac{2\alpha}{\alpha-1}} \lambda^{\frac{2\alpha}{\alpha-1}} -(1-\delta)\mu_1 \right)},
\end{equation*}
where $z_p$ is the optimal constant in Burkholder-Davis-Bundy's inequality (see \cite{FK09}).
\end{enumerate}

 Both upper bounds hold for all $t>0$ while both lower bounds holds for all $t>c(\alpha)\lambda^{-\frac{2\alpha}{\alpha-1}}.$ When $\alpha=2$, both lower bounds
hold for all $t>0$.
\end{thm}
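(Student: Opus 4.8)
The plan is to treat the upper and lower bounds separately, and in each case to reduce the problem to an integral inequality for the function $H(t):=\sup_{x\in D}\E|u_t(x)|^p$ (resp.\ for a lower analogue), then close the inequality via a renewal/Gronwall-type argument. For the upper bound, start from the mild formulation \eqref{mild:coloured}, take $p$-th moments, and apply Minkowski's inequality together with the Burkholder--Davis--Gundy inequality (with the sharp constant $z_p$, as in \cite{FK09}) to the stochastic integral term. Using Hypothesis \ref{h1} (or $\sigma(x)=x$ in part a)) to bound $|\sigma(u_s(y))|\le L_\sigma|u_s(y)|$, one is led to an estimate of the form
\begin{equation*}
\E|u_t(x)|^p \le C_p\,\Bigl(\int_D u_0(y)p_D(t,x,y)\,dy\Bigr)^p + C_p\,\lambda^p z_p^p\,\E\Bigl(\int_0^t\!\!\int_D\!\!\int_D p_D(t-s,x,y)p_D(t-s,x,z)\,f(y-z)\,|u_s(y)|\,|u_s(z)|\,dy\,dz\,ds\Bigr)^{p/2}.
\end{equation*}
The spectral decomposition $p_D(t,x,y)=\sum_n e^{-\mu_n t}\Phi_n(x)\Phi_n(y)$ gives the deterministic bound $\int_D u_0(y)p_D(t,x,y)\,dy\le \|u_0\|_\infty e^{-\mu_1 t}\Phi_1(x)/\ldots$, which supplies the $e^{-p\mu_1 t}$ factor (in part b), the $(1-\delta)$ is the price of absorbing lower-order eigenvalue contributions). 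For the noise term, the key analytic input is an estimate on the spatial-correlation integral $\int_D\int_D p_D(r,x,y)p_D(r,x,z)f(y-z)\,dy\,dz$; under Hypothesis \ref{h0} one compares $p_D$ with the free fractional heat kernel $p(r,\cdot)$, uses the scaling $p(r,y)=r^{-d/\alpha}p(1,r^{-1/\alpha}y)$ together with $f(x)\le c_2|x|^{-\beta}$, and obtains a bound of order $r^{-\beta/\alpha}$ (for $\delta_0$, a bound of order $r^{-1/\alpha}$). Plugging this in and iterating the resulting inequality $H(t)\le A e^{-p\mu_1 t}+B\int_0^t (t-s)^{-\beta/\alpha}H(s)\,ds$ (after an appropriate substitution removing the $e^{-p\mu_1 t}$ by working with $\widetilde H(t)=e^{p\mu_1 t}H(t)$, or with $(1-\delta)\mu_1$ in case b)) yields, by the standard fractional-Gronwall lemma, the exponential growth rate $c_1 p^{\alpha/(\alpha-\beta)}\lambda^{2\alpha/(\alpha-\beta)}$ (resp.\ $c_1 z_p^{2\alpha/(\alpha-1)}\lambda^{2\alpha/(\alpha-1)}$); tracking the $p$-dependence through $z_p\sim\sqrt p$ and the BDG constant gives the stated powers.

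For the lower bound, restrict attention to $x\in D_\epsilon$ and drop the stochastic integral's lower-order structure by a second-moment-type argument propagated to higher moments. Concretely, use $\E|u_t(x)|^p\ge (\E|u_t(x)|^2)^{p/2}$ is \emph{not} quite enough for the exponential rate, so instead one proceeds as in the intermittency literature: write the Picard/chaos expansion (or iterate the mild equation) and keep only nonnegative contributions, using $u_0\ge 0$, $p_D\ge 0$, and the lower Lipschitz bound $|\sigma(x)|\ge \ell_\sigma|x|$ from Hypothesis \ref{h1}. Hypothesis \ref{h2} together with the eigenfunction estimate \eqref{est} gives $\int_D u_0(y)p_D(t,x,y)\,dy\ge c\,e^{-\mu_1 t}\inf_{D_\epsilon}u_0\cdot(1-|x|)^{\alpha/2}\ge \overline c_2 e^{-\mu_1 t}$ uniformly on $D_\epsilon$. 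One then needs a matching \emph{lower} bound on the correlation integral $\int_{D_\epsilon}\int_{D_\epsilon}p_D(r,x,y)p_D(r,x,z)f(y-z)\,dy\,dz\ge c\,r^{-\beta/\alpha}$ valid on the relevant time range; here the restriction $t>c(\alpha)\lambda^{-2\alpha/(\alpha-1)}$ enters, because for small $r$ the Dirichlet kernel $p_D$ is not comparable to the free kernel near $\partial D$, and one must wait long enough for the heat mass to equilibrate inside $D_\epsilon$ (when $\alpha=2$ sharper Gaussian lower bounds for $p_D$ remove this constraint, which is why the $\alpha=2$ case holds for all $t>0$). Feeding this into the moment recursion and applying the lower-bound version of the fractional Gronwall lemma produces the exponential rate $c_2\lambda^{2\alpha/(\alpha-\beta)}$ (resp.\ $c_2\lambda^{2\alpha/(\alpha-1)}$), with the $-\mu_1$ coming from the initial-data factor.

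The main obstacle I anticipate is the two-sided control of the Dirichlet heat-kernel correlation integral $\int_D\int_D p_D(r,x,y)p_D(r,x,z)f(y-z)\,dy\,dz$ uniformly in $x$ (for the upper bound) and uniformly for $x\in D_\epsilon$ (for the lower bound), with the correct power $r^{-\beta/\alpha}$ and — crucially — with \emph{explicit} dependence on the parameters so that the $p$- and $\lambda$-exponents come out sharp. Upper bounds follow from $p_D\le p$ plus scaling, but the lower bound requires the interior estimates of \cite{Chen2}-type and a careful choice of the cutoff time $c(\alpha)\lambda^{-2\alpha/(\alpha-1)}$; reconciling this with the singular case $f=\delta_0$ (part b), where the integral collapses to $\int_D p_D(r,x,y)^2\,dy$) is the delicate point, and is presumably where the hypothesis $\alpha>1$ (equivalently $\beta<\alpha$ with $\beta\to 1$) is used to keep $r^{-1/\alpha}$ integrable. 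The remaining steps — BDG with sharp constants, Minkowski, and the fractional Gronwall iteration — are by now standard in this area (cf.\ \cite{FK09}) and should be routine once the kernel estimate is in hand.
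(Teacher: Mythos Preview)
Your approach to part b) is essentially the paper's, but two points deserve correction.

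First, Jensen's inequality $\E|u_t(x)|^p\ge(\E|u_t(x)|^2)^{p/2}$ \emph{is} enough for the lower bound: the stated lower exponential rate $c_2\lambda^{2\alpha/(\alpha-\beta)}-\mu_1$ does not depend on $p$, so reducing to $p=2$ loses nothing. The paper does exactly this and then runs the second-moment Gronwall argument you outline. There is no need to pass to a Picard or chaos iteration for the lower bound in b).

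Second, your explanation of the time restriction is wrong. The heat-kernel lower bounds of Proposition~\ref{yu} hold for \emph{all} $t>0$; the restriction $t>c(\alpha)\lambda^{-2\alpha/(\alpha-1)}$ comes instead from the fractional-Gronwall \emph{lower} bound (Proposition~\ref{esti}), which only yields $g(t)\ge c_2 e^{c_3k^{1/\rho}t}$ for $t>\frac{e}{\rho}(\Gamma(\rho)k)^{-1/\rho}$. With $k\asymp\lambda^2$ and $\rho=1-\tfrac{1}{\alpha}$ this is the stated threshold. The case $\alpha=2$ uses the sharper Proposition~\ref{estii}, valid for all $t>0$, which is why the restriction disappears there.

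For part a) the paper takes a genuinely different route: it exploits $\sigma(x)=x$ to write the Wiener-chaos expansion of $u_t(x)$, bounds each $n!\,\|\tilde h_n(\cdot,t,x)\|^2_{\mathcal H^{\otimes n}}$ directly via Propositions~\ref{yu} and~\ref{ub}, and sums the resulting series (using hypercontractivity for $p>2$). This buys two things over your Gronwall scheme. For the upper bound, it avoids the $(1-\delta)\mu_1$ loss that your BDG-plus-Proposition~\ref{ub} route would incur, recovering the sharp $-\mu_1$ in the exponent as stated. For the lower bound it is not merely a convenience but a necessity: in the colored-noise second-moment identity the integrand involves the cross-moment $\E[u_s(y)u_s(z)]$, and there is no obvious way to bound this below by $\inf_{w\in D_\epsilon}\E|u_s(w)|^2$ so as to close a Gronwall recursion. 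The chaos expansion sidesteps this by working with the iterated kernels directly, and is the reason the paper restricts a) to $\sigma(x)=x$.
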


Several remarks are in order. Observe that the bounds are not sharp in $p$ because the proof of  the lower bound is based 
on a second moment argument. However, as explained below, we are mainly interested in the  
dependence on $\lambda$, $t$ and $\mu_1$ of the moment bounds. Observe also that in the multidimensional case, we only consider the case $\sigma(x)=x$ known as parabolic Anderson model, since the method used in the space-time white noise case does not seem to apply in the multidimensional space setting. Instead, have used the Wiener-chaos expansion of the solution which is very suitable when $\sigma(x)=x$. Moreover, this allows to obtain sharper bounds in terms of $\mu_1$ in the exponential, since in this case we don't have the term $1-\delta$ in the upper bound.

The lower bound of Theorem \ref{12}b) when $\alpha=2$  was already obtained in the recent paper \cite{Xie}. Thus, Theorem \ref{12} extends this lower bound to the fractional Laplacian and higher space dimensions, and provides an upper bound of a similar type. Remark that Theorem \ref{12} can be easily extended to the ball of radius $R>0$. However, the extension to a bounded domain is not straightforward, because of the argument used in the proof of Proposition \ref{yu}.

A direct consequence of Theorem \ref{12} are the following bounds for the moment-type Lyapunov upper and lower exponents, in terms of $\lambda>0$. In case a), we obtain that for all $\lambda>0$,
\begin{equation} \label{lipu}\begin{split}
p \left(c_2\lambda^{\frac{2\alpha}{\alpha-\beta}}-\mu_1\right) &\leq  \liminf_{t\rightarrow \infty}\frac{1}{t}\log \inf_{x\in D_{\epsilon}}\E|u_t(x)|^p \\
&\leq \limsup_{t\rightarrow \infty}\frac{1}{t}\log \sup_{x \in  D}\E|u_t(x)|^p\leq p\left(\tilde{c}_1(p) \lambda^{\frac{2\alpha}{\alpha-\beta}}-
\mu_1\right),
\end{split}
\end{equation}
where $\tilde{c}_1(p)=c_1 p^{\frac{\alpha}{\alpha-\beta}}.$
Similar bounds hold for case b).
Recall from \cite{FK09} that $u$ is said to be weakly intermittent if for all $x \in D$, $$\limsup_{t\rightarrow \infty}\frac{1}{t}\log \E|u_t(x)|^2>0,$$
 and for all $p \geq 2$, and $x \in D$,
 $$\limsup_{t\rightarrow \infty}\frac{1}{t}\log \E|u_t(x)|^p<\infty.$$
 Heuristically, this phenomenon says that the solution $u$ will be concentrated  into a few very high peaks when $t$ is large. See \cite{FK09}
 and the references therein for a more detailed explanation of this phenomenon.
The bounds in (\ref{lipu}) show that in case a), if $\lambda \leq(\mu_1/\tilde{c}_1(p))^{\frac{\alpha-\beta}{2\alpha}}$, then
the solution to equation (\ref{eq10}) is not weakly intermittent, while
if  $\lambda\geq(\mu_1/c_2)^{\frac{\alpha-\beta}{2\alpha}}$, then the solution is weakly intermittent. A similar result holds for case b).

Intermittency for equations of the type (\ref{eq10}) but in all $\R^d$ have been largely studied in the literature, see e.g. \cite{FK13, HH15, BC16, FLO17}. However, much less is known in the case of bounded domains. 
In the recent paper \cite{FN14} (see also \cite{FGN16} for the extension to the fractional Laplacian), it is shown the existence of $\lambda_0(\mu_1)>0$ such that for all $\lambda<\lambda_0$,
\begin{equation*} 
-\infty<\limsup_{t\rightarrow \infty}\frac{1}{t}\log \sup_{x \in D}\E|u_t(x)|^p<0,
\end{equation*}
and  for all $\epsilon>0$, the existence of a $\lambda_1(\mu_1,\epsilon)>0$ such that for all $\lambda>\lambda_1$,
\begin{equation*} 
0<\liminf_{t\rightarrow \infty}\frac{1}{t}\log\inf_{x\in D_{\epsilon}}\E|u_t(x)|^p<\infty.
\end{equation*}
Here $u$ is the solution to equation (\ref{eq10}) with a general spatial covariance function $f$, and $\sigma$ satisfying Hypothesis \ref{h1}.
However, $\lambda_0$ and $\lambda_1$ are not explicit in those papers.
Therefore, Theorem \ref{12} provides an extension of these results with an explicit dependence of $\lambda_0$ and $\lambda_1$ in terms of $\mu_1$. Observe that the results in \cite{FN14, Xie, FGN16} already imply a dicotomy on the intermittency of the solution depending on large and small values of $\lambda$. In this paper, precise bounds for the moments for $t$ fixed are proved, which in particular imply more accurate estimates on $\lambda$ to deduce intermittency or non-intermittency of the solution. Observe also that this dicotomy phenomenon does not occur if one considers the same equation (\ref{eq10}) in $\R^d$ or in $D$ but with Newmann boundary conditions. In those cases, the solution  is  weakly intermittent for all $\lambda>0$, see e.g. \cite{FN14, FLO17}.

Theorem \ref{12}a) also implies that for all $p \geq 2$, $t>0$ and $x \in D_{\epsilon}$,
$$
\lim_{\lambda \rightarrow \infty} \frac{\log \log \E \vert u_t(x) \vert^p}{\log \lambda}=\frac{2\alpha}{\alpha-\beta},
$$
and similarly for the case b),
which is known as the excitation index of the solution introduced by \cite{KK14}. This result was obtained in \cite{LTF17} for $p=2$, $f$ the Riesz kernel and $\sigma$ satisfying Hypothesis \ref{h1}. See also \cite{KK14, FJ14} for previous results when $\alpha=2$ and $W$ is space-time white noise. The results in \cite{FJ14, LTF17} were the first that used the Gronwall's inequalities stated in Propositions \ref{esti} and \ref{estii} to show these type of results.
The proof of Theorem \ref{12}b) will be also based in those inequalities.

Consider now the deterministic heat equation $\partial_t u= \Delta u+\lambda u$ on a bounded domain $\mathcal{O}$ in $\R^d$, $d \leq 3$, with
smooth boundary and Dirichlet boundary condition $u_t(x)=0, x\in \partial \mathcal{O},  t>0$, and intial condition $u_0(x)=f(x)$, $f \in L^2 (\mathcal{O})$. It is shown in \cite{K99} that if $k_0$ is the smallest integer such that $\langle f, e_{k_0} \rangle \neq 0$, then
$$\limsup_{t \rightarrow \infty} \frac{1}{t} \log \Vert u_t \Vert_{L^2(\mathcal{O})}=\lambda-\mu_{k_0}.$$
In the same paper, the equation
$\partial_t u_t(x)= \Delta u_t(x)+\lambda u_t(x) dW_t,$ 
where $W_t$ is a real-valued Wiener process is also considered. In this case, following similar  computations as in that paper, 
it is easy to show that
$$
\limsup_{t\rightarrow \infty}\frac{1}{t}\log \sqrt{\E \Vert u_t \Vert^2_{L^2(\mathcal{O})}}=\frac{\lambda^2}{2}-\mu_{k_0}.
$$
Hence, the dycotomy phenomenon is also present in the deterministic case and the space independent white noise case. Observe that in those case we have precise
expressions for the Lyapunov exponents. For our equation (\ref{eq10}), 
even in the space-time white noise case and parabolic Anderson model, obtaining an explicit expression
for the upper second moment type Lyapunov exponent remains an open problem. Theorem \ref{12} gives a first hint of the general form of this expression.

The rest of the paper is organized as follows. Section 2 is devoted to define rigorously the Gaussian noise $W$, and the Wiener-chaos expansion
of square integrable random variables. In Section 3 we prove several heat kernel estimates that are needed for the proof of Theorem \ref{12}, and are also interesting in their own right.
Section 4 is devoted to the proof of Theorem \ref{12}. Finally, in the Appendix we recall some heat kernel estimates and fractional Gronwall's inequalities used in the paper.

\section{The Gaussian noise $W$} \label{sub1}

Let $\mathcal{D}(\R_+\times \R^d)$ be the space of real-valued infinitely differentiable functions with compact support.
Following \cite{Dalang} and \cite{minicourse}, on a complete probability space $(\Omega, \mathcal{F}, \P)$, we consider a centered Gaussian family of random variables $\{W(\varphi), \varphi \in \mathcal{D}(\R_+ \times \R^d)\}$ with covariance
$$
\E\left[ W(\varphi) W(\psi)\right]=\int_{\R_+ \times \R^{2d}} \varphi(t,x) \varphi(t,y) f(x-y) dx dy dt,
$$
where $f$ is as in (\ref{f1}).
Let $\mathcal{H}$ be the completion of $\mathcal{D}(\R_+\times \R^d)$ with respect to the inner product
$$
\langle \varphi, \psi \rangle_{\mathcal{H}}=\int_{\R_+ \times \R^{2d}} \varphi(t,x) \varphi(t,y) f(x-y) dx dy dt.
$$
The mapping $\varphi \mapsto W(\varphi)$ defined in $\mathcal{D}(\R_+ \times \R^d)$ extends to a linear isometry 
between $\mathcal{H}$ and the Gaussian space spanned by $W$. We will denote the isometry by 
$$
W(\phi)=\int_{\R_+ \times \R^d} \phi(t,x) W(dt,dx), \qquad \phi \in \mathcal{H}.
$$
Notice that if $\varphi, \psi \in \mathcal{H}$, then $\E\left[ W(\varphi) W(\psi)\right]=\langle \varphi, \psi \rangle_{\mathcal{H}}$. Moreover,
$\mathcal{H}$ contains the space of measurable functions $\phi$ on $\R_+ \times \R^d$ such that
$$
\int_{\R_+ \times \R^{2d}} \vert \phi(t,x) \phi(t,y)\vert  f(x-y) dx dy dt < \infty.
$$

When handling equation (\ref{mild:coloured}) with $\sigma(x)=x$, we will make use of its chaos expansion.
For any integer $n \geq 1$, we denote by ${\bf H}_n$ the $n$th Wiener chaos of $W$. Recall that ${\bf H}_0$ is simply $\R$ and for $n \geq 1$,
${\bf H}_n$ is the closed linear subspace of $L^2(\Omega)$ generated by the random variables
$$
\{H_n(W(h), h \in \mathcal{H}, \Vert h \Vert_{\mathcal{H}}=1\},
$$ 
where $H_n$ is the $n$th Hermite polynomial. For any $n \geq 1$, we denote by $\mathcal{H}^{\otimes n}$ (resp. $\mathcal{H}^{n}$) the
$n$th tensor product (resp. the $n$th symmetric tensor product) of $\mathcal{H}$. Then, the mapping $I_n(h^{\otimes n})=H_n(W(h))$
can be extended to a linear isometry bewteen  $\mathcal{H}^{\otimes n}$ (equipped with the modified norm $\sqrt{n!} \Vert \cdot \Vert_{\mathcal{H}^{\otimes n}})$ and ${\bf H}_n$. 

Let $\mathcal{F}^W$ the $\sigma$-field generated by $W$. Then, any $\mathcal{F}^W$-measurable random variable $F$ in $L^2(\Omega)$
can be expressed as
$$
F=\E(F)+\sum_{n=1}^{\infty} I_n(f_n),
$$
where the series converges in $L^2(\Omega)$, and the elements $f_n \in \mathcal{H}^{\otimes n}$ are determined by $F$.
This identity is called the Wiener-chaos expansion of $F$.

\section{Heat kernel estimates}

As a consequence of Theorems \ref{kernele} and \ref{kernele2} in the Appendix, the following upper and lower bounds for the fractional heat kernel on $D$ hold.
\begin{prop} \label{yu}
For any $\epsilon \in (0, \frac12)$, there exist positive constants $c_1(\epsilon)$, $c_2(\epsilon)$ and $c_3(\epsilon)$ such that for all $x \in D_{\epsilon}$ and $t>0$,
\begin{equation} \label{p1}
\int_{D_{\epsilon}} p_D(t,x,y)d y \geq c_1 e^{-\mu_1 t},
\end{equation}
for all $x \in D_{\epsilon}$ and $t>0$, 
\begin{equation} \label{p2}
\int_{D_{\epsilon}} p_D^2(t,x,y)d y \geq c_2 e^{-2\mu_1 t}t^{-d/\alpha},
\end{equation}
and if $f$ satisfies Hypothesis \textnormal{(\ref{h0})}, then for all $x,w \in D_{\epsilon}$ and $t>0$ such that $\vert x -w \vert \leq t^{\alpha}$,
\begin{equation} \label{p2bis}
\int_{D_{\epsilon} \times D_{\epsilon}} p_D(t,x,y) p_D(t,w,z) f(y-z) d y dz\geq 
c_3 e^{-2\mu_1 t} t^{-\beta/\alpha}.
\end{equation}
\end{prop}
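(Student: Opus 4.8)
The plan is to derive all three inequalities from the spectral decomposition of $p_D$ together with the eigenvalue/eigenfunction estimates recalled in the introduction, isolating the leading term $e^{-\mu_1 t}\Phi_1(x)\Phi_1(y)$ and controlling the tail $\sum_{n\ge 2} e^{-\mu_n t}\Phi_n(x)\Phi_n(y)$ by the heat kernel estimates of Theorems \ref{kernele} and \ref{kernele2} in the Appendix. For \eqref{p1}, I would write $\int_{D_\epsilon}p_D(t,x,y)\,dy = e^{-\mu_1 t}\Phi_1(x)\int_{D_\epsilon}\Phi_1(y)\,dy + (\text{tail})$; the estimate \eqref{est} gives $\Phi_1(x)\ge c^{-1}(1-|x|)^{\alpha/2}\ge c^{-1}\epsilon^{\alpha/2}$ for $x\in D_\epsilon$ and likewise a uniform positive lower bound for $\int_{D_\epsilon}\Phi_1(y)\,dy$, so the leading term is $\ge c(\epsilon)e^{-\mu_1 t}$. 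I must then show the remaining series does not spoil this: for large $t$ it is exponentially smaller (gap $\mu_2-\mu_1>0$), and for small $t$ one instead uses the semigroup property and a short-time Gaussian-type lower bound on $p_D$ off the boundary (Theorem \ref{kernele}) to bound $\int_{D_\epsilon}p_D(t,x,y)\,dy$ from below directly by a constant; patching the two regimes at a fixed time threshold gives the uniform constant $c_1(\epsilon)$.

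For \eqref{p2} the same idea applies with the square: by the semigroup property $\int_{D_\epsilon}p_D^2(t,x,y)\,dy \le \int_D p_D^2(t,x,y)\,dy = p_D(2t,x,x)$, but for the lower bound I would restrict the spatial integral to $D_\epsilon$ and use the two-sided short-time kernel bounds. The factor $t^{-d/\alpha}$ is the expected on-diagonal blow-up of the free $\alpha$-stable kernel; concretely, for $t$ small and $x\in D_\epsilon$ one has $p_D(t,x,y)\gtrsim t^{-d/\alpha}$ for $|y-x|\lesssim t^{1/\alpha}$ with $y$ still in $D_\epsilon$ (possible since $x$ is at distance $\ge\epsilon$ from $\partial D$), so $\int_{D_\epsilon}p_D^2(t,x,y)\,dy \gtrsim t^{-2d/\alpha}\cdot t^{d/\alpha} = t^{-d/\alpha}$; combine with $e^{-2\mu_1 t}\asymp 1$ on a bounded time interval. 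For large $t$, the spectral expansion gives $\int_{D_\epsilon}p_D^2(t,x,y)\,dy \ge e^{-2\mu_1 t}\Phi_1(x)^2\int_{D_\epsilon}\Phi_1(y)^2\,dy \gtrsim e^{-2\mu_1 t}$, which dominates $e^{-2\mu_1 t}t^{-d/\alpha}$ once $t$ is bounded below. Again patch at a fixed threshold.

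For \eqref{p2bis} I would again split the leading spectral term: the double integral equals $e^{-2\mu_1 t}\Phi_1(x)\Phi_1(w)\int_{D_\epsilon\times D_\epsilon}\Phi_1(y)\Phi_1(z)f(y-z)\,dy\,dz + (\text{tail})$, and the first factor is $\gtrsim e^{-2\mu_1 t}$ uniformly on $D_\epsilon\times D_\epsilon$ using \eqref{est} and Hypothesis \ref{h0} (the lower bound $f(y-z)\ge c_1|y-z|^{-\beta}\ge 0$ and $\Phi_1\ge 0$ make the integral a positive constant depending only on $\epsilon$). This handles large $t$ and gives the constant term $e^{-2\mu_1 t}$, but not the $t^{-\beta/\alpha}$ factor, which must come from the short-time regime. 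For $t$ small and $|x-w|\le t^\alpha$ (so $x$ and $w$ are within a ball of radius $\ll t^{1/\alpha}$ when $\alpha\le 1$; one should double-check the exponent $t^\alpha$ versus $t^{1/\alpha}$ in the hypothesis), I would lower-bound $p_D(t,x,y)$ and $p_D(t,w,z)$ by $c\,t^{-d/\alpha}$ for $y,z$ in suitable balls of radius $\sim t^{1/\alpha}$ around $x,w$ inside $D_\epsilon$, reducing to $t^{-2d/\alpha}\int\!\!\int_{|y-x|,|z-w|\lesssim t^{1/\alpha}} f(y-z)\,dy\,dz$; by Hypothesis \ref{h0} this last integral is $\gtrsim \int\!\!\int |y-z|^{-\beta} \asymp (t^{1/\alpha})^{2d-\beta}$ by scaling, yielding $t^{-2d/\alpha}\cdot t^{(2d-\beta)/\alpha} = t^{-\beta/\alpha}$, and $e^{-2\mu_1 t}\asymp 1$ on the short-time interval. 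Patching the two regimes at a fixed threshold completes the proof. I expect the main obstacle to be the careful bookkeeping in the short-time regime of \eqref{p2bis}: one must ensure the balls of radius $\sim t^{1/\alpha}$ around $x$ and $w$ stay inside $D_\epsilon$ (using that $x,w\in D_\epsilon$ and $t$ small), that the condition $|x-w|\le t^\alpha$ is strong enough to make the two balls "see each other" at scale $t^{1/\alpha}$ so the Riesz integral genuinely produces the claimed power, and that the interior Gaussian-type lower bound from Theorem \ref{kernele} for the killed kernel is valid on exactly these scales — which is precisely where the argument uses that $D$ is a ball rather than a general bounded domain.
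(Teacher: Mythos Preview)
Your plan is correct and the short-time arguments (the volume estimate $\mathrm{Vol}\{y\in D_\epsilon:\vert x-y\vert\le t^{1/\alpha}\}\ge c\,t^{d/\alpha}$ and the Riesz scaling giving $t^{-\beta/\alpha}$) are exactly what the paper does. The one genuine methodological difference is in the large-$t$ regime: you propose to isolate the leading spectral term $e^{-\mu_1 t}\Phi_1(x)\Phi_1(y)$ and control the tail $\sum_{n\ge 2}$ via the spectral gap $\mu_2-\mu_1>0$, whereas the paper never writes out the spectral decomposition at all. Instead it applies Theorems \ref{kernele} and \ref{kernele2} directly for \emph{all} $t>0$; those kernel estimates already carry the factor $e^{-\mu_1 t}$ and the boundary weight $\Phi_1(x)\Phi_1(y)$ (this is precisely intrinsic ultracontractivity, packaged), so after invoking \eqref{est} on $D_\epsilon$ one is immediately reduced to a case analysis on the $\min(\cdot,\cdot)$ structure of those bounds, with no tail to control. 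Your route works but re-derives part of what the cited theorems already give; the paper's route is shorter once those theorems are taken as black boxes. Finally, your suspicion about the exponent is well founded: the paper's own proof uses the condition $\vert x-w\vert\le t^{1/\alpha}$ (written as $\vert x-w\vert<\sqrt{t}$ when $\alpha=2$), not $\vert x-w\vert\le t^{\alpha}$, so the statement contains a typo and your scaling argument for \eqref{p2bis} is on the right track.
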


\begin{proof}
We start assuming $\alpha=2$. From Theorem \ref{kernele} and (\ref{est}), for all $x \in D_{\epsilon}$ and $t>0$,
\begin{equation*}
\begin{split}
\int_{D_{\epsilon}} p_D(t,x,y)d y
&\geq c  \int_{D_{\epsilon}} \min\left(1, \frac{\epsilon^2}{1 \wedge t} \right) e^{-\mu_1 t} \frac{e^{-c \frac{\vert x-y \vert^2}{t}}}{1 \wedge t^{d/2}} dy \\
&\geq c e^{-\mu_1 t}\left(\int_{D_{\epsilon}}  \min\left(1, \frac{\epsilon^2}{t} \right)\frac{e^{-c \frac{\vert x-y \vert^2}{t}}}{t^{d/2}} {\bf 1}_{\{t<1\}} dy+\int_{D_{\epsilon}} e^{-c \frac{\vert x-y \vert^2}{t}} {\bf 1}_{\{t\geq 1\}}dy\right).
\end{split}
\end{equation*}
The second integral in the last display is lower bounded by $c(\epsilon)= {\bf 1}_{\{t\geq 1\}}$. The first one equals
\begin{equation*}
\int_{D_{\epsilon}}  \frac{e^{-c \frac{\vert x-y \vert^2}{t}}}{t^{d/2}} {\bf 1}_{\{t<\epsilon^2\}} dy+\epsilon^2 \int_{D_{\epsilon}}  \frac{e^{-c \frac{\vert x-y \vert^2}{t}}}{t^{1+\frac{d}{2}}} {\bf 1}_{\{\epsilon^2\leq t<1\}} dy.
\end{equation*}
The second integral in the last display is lower bounded by $c(\epsilon){\bf 1}_{\{\epsilon^2\leq t<1\}} $, while the second is lower bounded by
\begin{equation*}
\int_{D_{\epsilon}}  \frac{e^{-c \frac{\vert x-y \vert^2}{t}}}{t^{d/2}}  {\bf 1}_{\{\vert x-y \vert^2 \leq t < \epsilon^2\}} dy.
\end{equation*}
We now observe that for all $x \in D_{\epsilon}$ and $t<\epsilon^2$, since $\epsilon < \frac12$,
$$
\text{Vol} \{y \in D_{\epsilon}: \vert x-y \vert^2 \leq t \} = \text{Vol}(B_x(\sqrt{t}) \cap D_{\epsilon})\geq B_0\left(\frac{\sqrt{t}}{2}\right)=c_d t^{d/2},
$$
where $B_x(r)=\{ y \in \R^d: \vert y -x \vert \leq \sqrt{t}\}$,  for $x \in \R^d$ and $r>0$, and Vol denotes the $d$-dimensional volume.
 Therefore,
$$
\int_{D_{\epsilon}}  \frac{e^{-c \frac{\vert x-y \vert^2}{t}}}{t^{d/2}}  {\bf 1}_{\{\vert x-y \vert^2 \leq t < \epsilon^2\}} dy\geq c {\bf 1}_{\{t<\epsilon^2\}},
$$
which proves (\ref{p1}) for $\alpha=2$.

Following along the same lines, we get that 
\begin{equation*} \begin{split}
\int_{D_{\epsilon}}  p_{D}^2(t,x,y)d y
&\geq c e^{-2\mu_1 t} \left\{ \int_{D_{\epsilon}}  \frac{e^{-c \frac{\vert x-y \vert^2}{t}}}{t^d}  {\bf 1}_{\{\vert x-y\vert^2 \leq t<\epsilon^2\}}dy +  {\bf 1}_{\{t\geq\epsilon^2\}}\right\}\\
&\geq c e^{-2\mu_1 t} \left\{ t^{-d/2} {\bf 1}_{\{t<\epsilon^2\}}+ {\bf 1}_{\{t\geq\epsilon^2\}}\right\} \\
&\geq c e^{-2\mu_1 t}  t^{-d/2},
\end{split}
\end{equation*}
which shows (\ref{p2}) when $\alpha=2$.

We next show (\ref{p2bis}) for $\alpha=2$. We assume that $f$ satisfies Hypothesis \ref{h0}. From Theorem \ref{kernele} and (\ref{est}), for all $x, w \in D_{\epsilon}$ and $t>0$, 
\begin{equation*}  \begin{split}
&\int_{D_{\epsilon} \times D_{\epsilon}} p_D(t,x,y) p_D(t,w,z) f(y-z) d y dz\\
&\geq c e^{-2\mu_1 t} \left\{ \int_{D_{\epsilon}\times D_{\epsilon}}  \frac{e^{-c \frac{\vert x-y \vert^2}{t}}}{t^{d/2}}  {\bf 1}_{\{\vert x-y\vert^2 \leq t<\epsilon^2\}} \frac{e^{-c \frac{\vert w-z \vert^2}{t}}}{t^{d/2}}  {\bf 1}_{\{\vert w-z\vert^2 \leq t\}} \vert y-z \vert^{-\beta}  dy dz+  {\bf 1}_{\{t\geq\epsilon^2\}}\right\}.
\end{split}
\end{equation*}
Next observe that since $\vert x-w\vert <\sqrt{t}$, $\vert x-y\vert^2 \leq t$, and $\vert w-z\vert^2 \leq t$, we get that $\vert y-z \vert^{-\beta}\geq t^{-\beta/2}$.
Therefore, proceeding as above, we conclude that if $x,w \in D_{\epsilon}$ and $t>0$ are such that $\vert x-w\vert <\sqrt{t}$, then
\begin{equation*}  \begin{split}
\int_{D_{\epsilon} \times D_{\epsilon}} p_D(t,x,y) p_D(t,w,z) f(y-z) d y dz&\geq  c e^{-2\mu_1 t} \left\{ t^{-\beta/2} {\bf 1}_{\{t<\epsilon^2\}}+ {\bf 1}_{\{t\geq\epsilon^2\}}\right\} \\
&\geq c e^{-2\mu_1 t}  t^{-\beta/2},
\end{split}
\end{equation*}
which concludes the proof of (\ref{p2}) when $\alpha=2$.

We now assume $\alpha \in (1,2)$. Similarly as above, appealing to Theorem \ref{kernele2} and (\ref{est}), for all $x \in {D_{\epsilon}} $ and $t>0$,
\begin{equation*}
\begin{split}
&\int_{D_{\epsilon}}  p_D(t,x,y)d y \\
&\geq c e^{-\mu_1 t} \bigg\{ \int_{D_{\epsilon}}   \min\left(1, \frac{\epsilon^\alpha}{t} \right)\min \left( t^{-1/\alpha}, \frac{t}{\vert x-y \vert^{\alpha+d}} \right) {\bf 1}_{\{t<1\}}dy +{\bf 1}_{\{t \geq 1\}} \bigg\}.
\end{split}
\end{equation*}
The integral in the last display equals
\begin{equation*}
\begin{split}
 \int_{{D_{\epsilon}} }  \min \left( t^{-1/\alpha}, \frac{t}{\vert x-y \vert^{\alpha+d}} \right) {\bf 1}_{\{t<\epsilon^{\alpha}\}} dy + \epsilon^{\alpha}\int_{{D_{\epsilon}} }  \min \left( t^{-1/\alpha-1}, \frac{1}{\vert x-y \vert^{\alpha+d}} \right)  {\bf 1}_{\{\epsilon^{\alpha}\leq t <1\}} dy.
\end{split}
\end{equation*}
The second integral in the last display is lower bounded by $c(\epsilon) {\bf 1}_{\{\epsilon^{\alpha}\leq t <1\}}$, while the first one is lower bounded by 
\begin{equation*}
\begin{split}
 \int_{{D_{\epsilon}} }  t^{-d/\alpha} \min \left( t^{(d-1)/\alpha}, \left(\frac{t^{1/\alpha}}{\vert x-y \vert} \right)^{d+\alpha} \right) {\bf 1}_{\{\vert x-y \vert^{\alpha}<t<\epsilon^{\alpha}\}} dy.
\end{split}
\end{equation*}
As before, for all $x \in D_{\epsilon}$ and $t<\epsilon^{\alpha}$, since $\epsilon < \frac12$,
$$
\text{Vol} \{y \in D_{\epsilon}: \vert x-y \vert^{\alpha} \leq t \} = \text{Vol}(B_x(t^{1/\alpha}) \cap D_{\epsilon})\geq B_0\left(\frac{t^{1/\alpha}}{2}\right)=c_d t^{d/\alpha},
$$
which concludes the proof of (\ref{p1}) for $\alpha \in (1,2)$.
Similarly, 
\begin{equation*}
\begin{split}
&\int_{{D_{\epsilon}} } p_{D}^2(t,x,y)d y\\
&\geq c 
e^{-2\mu_1 t} \bigg\{  \int_{{D_{\epsilon}} } 
 \min\left(1, \frac{\epsilon^{2\alpha}}{t^2} \right)\min \left( t^{-2/\alpha}, \frac{t^2}{\vert x-y \vert^{2(\alpha+d)}} \right) {\bf 1}_{\{t<1\}} dy
+{\bf 1}_{\{t \geq 1\}} \bigg\} \\
&\geq c e^{-2\mu_1 t}\left\{  t^{-d/\alpha} {\bf 1}_{\{t<\epsilon^{\alpha}\}} 
+{\bf 1}_{\{t \geq \epsilon^{\alpha} \}} \right\} \\
&\geq c  e^{-2\mu_1 t}  t^{-d/\alpha},
\end{split}
\end{equation*}
which shows (\ref{p1}) for $\alpha \in (1,2)$.

Finally, for all $x,w \in D_{\epsilon}$ and $t>0$ such that $\vert x-w\vert <t^{1/\alpha}$,
\begin{equation*}  \begin{split}
\int_{D_{\epsilon} \times D_{\epsilon}} p_D(t,x,y) p_D(t,w,z) f(y-z) d y dz&\geq  c e^{-2\mu_1 t} \left\{ t^{-\beta/\alpha} {\bf 1}_{\{t<\epsilon^{\alpha}\}}+ {\bf 1}_{\{t\geq\epsilon^\alpha\}}\right\} \\
&\geq c e^{-2\mu_1 t}  t^{-\beta/\alpha},
\end{split}
\end{equation*}
which proves (\ref{p2bis}) when $\alpha \in (1,2)$.
\end{proof}

\begin{prop} \label{ub}
For all $\delta>0$, there exist $c_1,c_2(\delta)>0$ such that for all $x,w \in D$ and $t>0$,
\begin{equation} \label{p11}
\int_{D}  p_D(t,x,y)dy\leq c_1 e^{-\mu_1 t},
\end{equation}
and
\begin{equation} \label{p22}
\int_{D \times D}  p_D(t,x,y)p_D(t,w,z) f(y-z)d y dz\leq c_2 e^{-(2-\delta)\mu_1 t} t^{-a/\alpha},
\end{equation}
where \begin{equation*}
a=\begin{cases}
d, & \text{if} \quad f=\delta_0,\\
\beta, & \text{if} \quad f \text{ satisfies Hypothesis \textnormal{\ref{h0}}}.
\end{cases}
\end{equation*}
\end{prop}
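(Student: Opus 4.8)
The plan is to mirror the structure of the proof of Proposition \ref{yu}, but now working on all of $D$ (not $D_\epsilon$) and seeking \emph{upper} bounds, so all the inequalities get reversed and we must integrate the Gaussian/stable kernel tails over the whole ball. First I would establish \eqref{p11}: by the spectral decomposition $p_D(t,x,y)=\sum_n e^{-\mu_n t}\Phi_n(x)\Phi_n(y)$ one has $\int_D p_D(t,x,y)\,dy = \sum_n e^{-\mu_n t}\Phi_n(x)\int_D \Phi_n(y)\,dy$; the cleanest route is the semigroup/submarkovian property $\int_D p_D(t,x,y)\,dy \le 1$ together with the factorization $p_D(t,x,y)\le e^{-\mu_1 t/2}\, p_D(t/2,x,\cdot)\ast\cdots$ — more precisely, using Chapman–Kolmogorov, $\int_D p_D(t,x,y)\,dy = \int_D p_D(t/2,x,z)\big(\int_D p_D(t/2,z,y)\,dy\big)\,dz \le \int_D p_D(t/2,x,z)\,dz$, and then bounding $p_D(t/2,x,z)\le c\,e^{-\mu_1 t/2}\Phi_1(x)\Phi_1(z)\cdot(\text{correction})$ via the intrinsic ultracontractivity estimate (Theorems \ref{kernele}/\ref{kernele2} in the Appendix, which give the $e^{-\mu_1 t}$ prefactor for large $t$), while for small $t$ one uses $\int_D p_D(t,x,y)\,dy\le 1\le e^{\mu_1}e^{-\mu_1 t}$. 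This yields \eqref{p11} with a constant $c_1$ independent of $x$ and $t$.

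For \eqref{p22} the idea is the same splitting into $t<1$ and $t\ge 1$. On $\{t\ge 1\}$, using \eqref{p11} twice and $f$ integrable against the kernels (which holds under \eqref{g}, satisfied by both the Riesz case Hypothesis \ref{h0} and $f=\delta_0$ with $d=1<\alpha$), one bounds the double integral by $c\,e^{-2\mu_1 t}\le c\,e^{-(2-\delta)\mu_1 t}t^{-a/\alpha}$ since $t^{-a/\alpha}$ is bounded below on $\{t\ge 1\}$ and $e^{-2\mu_1 t}\le e^{-(2-\delta)\mu_1 t}$. On $\{t<1\}$, I would insert the heat-kernel upper bounds from the Appendix: for $\alpha=2$, $p_D(t,x,y)\le c\, e^{-\mu_1 t}\, t^{-d/2} e^{-c|x-y|^2/t}$, and for $\alpha\in(0,2)$, $p_D(t,x,y)\le c\, e^{-\mu_1 t}\min\!\big(t^{-d/\alpha}, t\,|x-y|^{-(d+\alpha)}\big)$ (the global, not intrinsic, short-time bound). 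Substituting these, the task reduces to estimating, up to the $e^{-2\mu_1 t}$ factor,
\begin{equation*}
\int_{\R^d\times\R^d} q_t(x-y)\,q_t(w-z)\,f(y-z)\,dy\,dz,
\end{equation*}
where $q_t$ is the relevant kernel. In the Riesz case $f(y-z)=c|y-z|^{-\beta}$, a change of variables and the scaling $q_t(u)=t^{-d/\alpha}q_1(t^{-1/\alpha}u)$ (for the stable kernel; analogously for Gaussian) turns this into $t^{-\beta/\alpha}$ times a finite constant, the finiteness being exactly the condition $\beta<d$ (local integrability of $|u|^{-\beta}$) combined with the decay of $q_1$ at infinity; this gives the $t^{-a/\alpha}$ with $a=\beta$. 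In the white-noise case $f=\delta_0$, the double integral collapses to $\int_{\R}q_t(x-y)q_t(w-y)\,dy$, which by Cauchy–Schwarz (or direct Gaussian convolution, $\alpha=2$) is $\le (\int q_t^2)^{1/2}(\int q_t^2)^{1/2}=\int q_t(u)^2\,du = c\,t^{-d/\alpha}$ when $d<\alpha$ — here $d=1$ — giving $a=d$.

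The one genuine subtlety — and the reason the proposition is stated with the extra $\delta$ — is that the short-time kernel bounds I want to use have prefactor $e^{-\mu_1 t}$ but are only sharp (equivalently, only available with a clean constant) when combined with the long-time intrinsic-ultracontractivity regime; more honestly, the global upper bound $p_D(t,x,y)\le C\,t^{-d/\alpha}(\cdots)$ valid for \emph{all} $t$ carries no exponential gain, while the refined bound that does carry $e^{-\mu_1 t}$ degrades the polynomial factor near $t=0$. The standard fix, which I would carry out, is to interpolate: write $p_D(t,x,y)\le p_D(t,x,y)^{\theta}p_D(t,x,y)^{1-\theta}$ and apply the global bound to one factor and the $e^{-\mu_1 t}$-bound to the other, or equivalently to use $p_D(t,x,y)\le c_\delta\, e^{-(1-\delta/2)\mu_1 t}\,(1\wedge t)^{-d/\alpha}\cdot(\text{spatial factor})$, which follows by splitting $p_D(t,\cdot,\cdot)=p_D(\delta' t,\cdot,\cdot)\cdot$(Chapman–Kolmogorov)$\cdot p_D((1-\delta')t,\cdot,\cdot)$ and applying the plain short-time bound to the first piece and the exponential decay $\|p_D((1-\delta')t,\cdot,\cdot)\|_\infty\le c\,e^{-\mu_1(1-\delta')t}$ (valid for $t\ge 1$, and absorbed into a constant for $t<1$) to the second. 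This is exactly where the $2-\delta$ rather than $2$ in the exponent of \eqref{p22} comes from, and it is the step I expect to need the most care in getting the constants to depend only on $\delta$ (and on $f$ through $c_1,c_2,\beta$), uniformly in $x,w\in D$ and $t>0$. Once this interpolated bound is in hand, the convolution/scaling estimates of the previous paragraph go through verbatim and yield \eqref{p22}.
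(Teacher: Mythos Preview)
Your overall strategy is correct and would succeed, but it is considerably more elaborate than the paper's proof, and the extra complication stems from a misreading of the Appendix heat-kernel estimates. Theorems~\ref{kernele} and~\ref{kernele2} already give, \emph{uniformly for all $t>0$}, an upper bound of the form
\[
p_D(t,x,y)\le C\,e^{-\mu_1 t}\cdot\frac{(\text{spatial factor})}{1\wedge t^{d/\alpha}},
\]
so there is no tension between ``short-time polynomial'' and ``long-time exponential'' behaviour to interpolate away. The paper therefore dispenses entirely with your Chapman--Kolmogorov splitting: for \eqref{p11} one just integrates the bound above; for \eqref{p22} with $f=\delta_0$ one uses the semigroup identity $\int_D p_D^2(t,x,y)\,dy=p_D(2t,x,x)$ and reads off the bound; for the Riesz case one plugs in the bound and invokes \cite[Lemma~4.1]{LTF17} for the convolution (essentially your scaling computation).

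Accordingly, you have misidentified the origin of the $\delta$. It does not come from any interpolation between two kernel regimes; it appears only at the very last step, when one rewrites $e^{-2\mu_1 t}/(1\wedge t^{a/\alpha})$ as $c(\delta)\,e^{-(2-\delta)\mu_1 t}\,t^{-a/\alpha}$. For $t<1$ the two expressions agree; for $t\ge1$ one needs $t^{a/\alpha}\le c(\delta)\,e^{\delta\mu_1 t}$, which is just ``exponential beats polynomial''. (Your justification ``$t^{-a/\alpha}$ is bounded below on $\{t\ge1\}$'' is false --- it tends to $0$ --- but the inequality you assert there is nonetheless true for this reason.) So your proposed $p_D^{\theta}p_D^{1-\theta}$ or Chapman--Kolmogorov interpolation is unnecessary; replacing it with this one-line observation recovers exactly the paper's argument.
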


\begin{proof}
We first assume $\alpha=2$. By Theorem \ref{kernele}, for all $x \in D$ and $t>0$,
\begin{equation*}
\begin{split}
\int_{D} p_D(t,x,y)d y
&\leq c  e^{-\mu_1 t}\int_{D}  \frac{e^{-c \frac{\vert x-y \vert^2}{t}}}{1 \wedge t^{d/2}} dy \leq c e^{-\mu_1 t},
\end{split}
\end{equation*}
which shows (\ref{p11}). Let $f=\delta_0$.
By the semigroup property and Theorem \ref{kernele}, for all $\delta>0$,
\begin{equation*}
\int_D p_{D}^2(t,x,y)dy
= p_{D}(2t,x,x) \leq c  e^{-2\mu_1 t}\frac{1}{1 \wedge t^{d/2}} \leq c(\delta) e^{-(2-\delta)\mu_1 t} t^{-d/2}.
\end{equation*}
Finally, by Theorem \ref{kernele}, when $f$ satisfies Hypothesis \textnormal{\ref{h0}}, we get that
\begin{equation*} \begin{split} 
&\int_{D \times D}  p_D(t,x,y)p_D(t,w,z) f(y-z)d y dz\leq c  e^{-2\mu_1 t}\int_{D\times D}  \frac{e^{-c \frac{\vert x-y \vert^2}{t}}}{1 \wedge t^{d/2}}  \frac{e^{-c \frac{\vert w-z \vert^2}{t}}}{1 \wedge t^{d/2}}  \vert y-z \vert^{-\beta} dy dz\\
&\qquad \qquad \qquad\leq c(\delta)  e^{-(2-\delta)\mu_1 t}\int_{\R^d\times \R^d}  \frac{e^{-c \frac{\vert x-y \vert^2}{t}}}{t^{d/2}}  \frac{e^{-c \frac{\vert w-z \vert^2}{t}}}{ t^{d/2}}  \vert y-z \vert^{-\beta} dy dz\\
&\qquad \qquad \qquad\leq c(\delta)  e^{-(2-\delta)\mu_1 t} t^{-a/\alpha},
\end{split}
\end{equation*}
where we have used \cite[Lemma 4.1]{LTF17}  in the last inequality.
\end{proof}

\section{Proof of Theorem \ref{12}}

\subsection{Proof of the lower bound of Theorem \ref{12}b)}
By Jensen's inequality, for any $p\geq 2$,
\begin{equation} \label{ji}
\E|u_t(x)|^p \geq \left(\E|u_t(x)|^2\right)^{p/2}.
\end{equation}
Therefore, it suffices to prove the lower bound for $p=2$. 
Taking the second moment to the mild formulation (\ref{mild:coloured}) we obtain that for all $x \in D$ and $t>0$,
\begin{equation*}
\E|u_t(x)|^2=\left(\int_{D} u_0(y)p_D(t,x,y)d y\right)^2+\lambda^2\int_0^t\int_{D} p_{D}^2(t-s,x,y)\E|\sigma(u_s(y))|^2d y d s.
\end{equation*}
Hypothesis \ref{h2} and the heat kernel estimate (\ref{p1}) yield to
\begin{equation*}
\begin{split}
\int_D u_0(y)p_D(t,x,y)d y\geq  \inf_{y \in D_{\epsilon}} u_0(y) \int_{D_{\epsilon}} p_D(t,x,y)d y \geq c e^{-\mu_1 t}.
\end{split}
\end{equation*}
On the other hand, from Hypothesis \ref{h1}and the heat kernel estimate (\ref{p2}), we get that 
\begin{equation*}
\begin{split}
\int_0^t\int_D p_{D}^2(t-s,x,y)\E|\sigma(u_s(y))|^2d y d s &\geq  \ell^2_{\sigma}\int_0^t\int_{D_{\epsilon}} p_{D}^2(t-s,x,y)\E|u_s(y)|^2 d y d s\\
&\geq \ell^2_{\sigma}\int_0^t h_{\epsilon}(s) \int_{D_{\epsilon}} p_{D}^2(t-s,x,y) dy d s \\
&\geq c\int_0^t h_{\epsilon}(s)   e^{-2\mu_1 (t-s) } (t-s)^{-1/\alpha} d s,
\end{split}
\end{equation*}
where $
h_{\epsilon}(s):=\inf_{y \in D_{\epsilon}} \E|u_s(y)|^2.
$
Now, set $g_{\epsilon}(t)=e^{2\mu_1 t} h_{\epsilon}(t) $. The estimates above show that for all $t>0$,
\begin{equation*}
g_{\epsilon}(t) \geq c\left(1+\lambda^2 \int_0^t (t-s)^{-1/\alpha} g_{\epsilon}(s) d s\right).
\end{equation*}
Finally, Proposition \ref{esti} with $\rho=1-\frac{1}{\alpha}$ and Proposition \ref{estii} conclude the desired lower bound.

\subsection{Proof of the upper bound of Theorem \ref{12}b)}
Taking the $p$th moment to the mild formulation (\ref{mild:coloured}) and appealing to Burkholder-Davis-Bundy's and Minkowski's inequalities, it holds that for all $x \in D$ and $t>0$,
\begin{equation*} \begin{split}
\E|u_t(x)|^p&\leq 2^{p-1} \bigg\{\left(\int_D u_0(y)p_D(t,x,y)d y\right)^p \\
& \qquad \qquad +\lambda^p z^p_p\left(\int_0^t\int_D p_{D}^2(t-s,x,y) (\E \vert \sigma(u_s(y)) \vert^p)^{2/p} d y d s\right)^{p/2} \bigg\},
\end{split}
\end{equation*}
where $z_p$ is as in the statement of Theorem \ref{12}b).
Since $u_0$ is bounded, and using the heat kernel estimate (\ref{p11}), we get
\begin{equation*}
\begin{split}
\int_D u_0(y)p_D(t,x,y)d y\leq c_1  e^{-\mu_1 t}.
\end{split}
\end{equation*}
Using Hypothesis \ref{h1} and the heat kernel estimate (\ref{p22}), we obtain
\begin{equation*}
\begin{split}
\int_0^t\int_D p_{D}^2(t-s,x,y)(\E|\sigma(u_s(y))|^p)^{2/p} d y d s&\leq L^2_{\sigma}\int_0^t\int_D p_{D}^2(t-s,x,y)(\E|u_s(y)|^p)^{2/p}d y d s\\
&\leq L^2_{\sigma}\int_0^t h(s) \left(\int_D p_{D}^2(t-s,x,y)d y \right)ds \\
&\leq c \int_0^t h(s)  e^{-(2-\delta)\mu_1 (t-s) } (t-s)^{-1/\alpha}  ds,
\end{split}
\end{equation*}
where 
$
h(s)=\sup_{y \in D} (\E|u_s(y)|^p)^{2/p}.
$
The estimates above show that for all $t>0$,
$$
g(t)\leq c\left(1 + \lambda^2 z_p^2 \int_0^t  \frac{g(s)}{(t-s)^{1/\alpha}} d s\right),
$$
where $g(t)=e^{(2-\delta)\mu_1 t} h(t)$.
Finally, Proposition \ref{esti} with $\rho=1-\frac{1}{\alpha}$ concludes.

\subsection{Proof of Theorem \ref{12}a)}

In this case, following \cite{BC16}, the solution to (\ref{eq10}) has the following Wiener-chaos expansion in $L^2(\Omega)$
\begin{equation} \label{expansion}
u_t(x)= h_0(t,x)+ \sum_{n \geq 1}  \lambda^n I_n(h_n(\cdot,t,x)),
\end{equation}
where $h_0(t,x)=\int_D u_0(y)p_D(t,x,y)d y$, and for $n \geq 1$,
$I_n$ denotes the multiple Wiener integral with respect to $W$ in $\R_+^n \times D^n$, and for any $(t_1,...,t_n) \in \R_+^n$ and $x_1,...,x_n \in D$,
\begin{equation*} \begin{split}
&h_n(t_1,x_1,...,t_n,x_n, t,x)=p_D(t-t_{n},x,x_n)p_D(t_n-t_{n-1},x_{n},x_{n-1}) \\ 
&\qquad \qquad \qquad \cdots p_D(t_2-t_1,x_2,x_1)  h_0(t_1,x_1) {\bf 1}_{\{0<t_1<\cdots <t_n<t\}}.
\end{split}
\end{equation*}

Therefore,
\begin{equation*}
\E \vert u_t(x) \vert^2= \vert h_0(t,x) \vert^2+ \sum_{n \geq 1} \lambda^{2n} n! \Vert \tilde{h}_n(\cdot, t,x)\Vert^2_{\mathcal{H}^{\otimes 2}},
\end{equation*}
where $\tilde{h}_n$ denotes the symmetrization of $h_n$. That is,
\begin{equation*}
\begin{split}
& n!\Vert \tilde{h}_n(\cdot, t,x)\Vert^2_{\mathcal{H}^{\otimes 2}}=
 \int_{0<t_1<\cdots <t_n<t} \int_{D^{2n}} p_D(t-t_n,x,x_n) p_D(t-t_n,x,y_n) f(x_n-y_n) \\
&  \times p_D(t_n-t_{n-1},x_n,x_{n-1}) p_D(t_n-t_{n-1},x_{n},y_{n-1}) f(x_{n-1}-y_{n-1})\cdots p_D(t_2-t_1,x_{2},x_{1})  \\
&\times p_D(t_2-t_1,x_{2},y_{1}) f(x_{1}-y_{1})\vert h_0(t_1,x_1) \vert^2 dx_1\cdots dx_n dy_1 \cdots dy_n dt_1 \cdots dt_n.
\end{split}
\end{equation*}

Now, appealing to Propositions \ref{yu} and \ref{ub}, we get
\begin{equation*}
\begin{split}
&c_2  e^{-2\mu_1 t} 
 \int_{0<t_1<\cdots <t_n<t} (t-t_n)^{-a/\alpha} \prod_{2=1}^n (t_i-t_{i-1})^{-\beta/\alpha}dt_1 \cdots dt_n \\
&\leq n!\Vert \tilde{h}_n(\cdot, t,x)\Vert^2_{\mathcal{H}^{\otimes 2}}\leq c_1 e^{-2\mu_1 t} 
 \int_{0<t_1<\cdots <t_n<t} (t-t_n)^{-a/\alpha} \prod_{2=1}^n (t_i-t_{i-1})^{-\beta/\alpha}dt_1 \cdots dt_n.
 \end{split}
\end{equation*}
Following similar computations as in \cite{BC16, B17}, using \cite[Lemma 4.5]{HH15}, it is easy to see that the last display implies that
\begin{equation*} 
 c_2  e^{-2\mu_1 t} \sum_{n \geq 1} \lambda^{2n} 
C_2^n (n!)^{\frac{\beta}{\alpha}-1} t^{-\frac{n \beta}{\alpha}+n}  \leq \E \vert u_t(x) \vert^2\leq   
c_1  e^{-2\mu_1 t} \sum_{n \geq 0} \lambda^{2n} 
C_1^n (n!)^{\frac{\beta}{\alpha}-1} t^{-\frac{n \beta}{\alpha}+n},
\end{equation*}
and this gives the statement of Theorem \ref{12} for $p=2$. 
The lower bound for $p \geq 2$ follows using Jensen's inequality as in (\ref{ji}).
For the upper bound, as in \cite{BC16,B17}, we have that by Minkowski's inequality and the equivalence of norms in a fixed Wiener chaos, for all $p \geq 2$,
\begin{equation*}
\left(\E \vert u_t(x) \vert^p\right)^{1/p} \leq \sum_{n \geq 0} (p-1)^{n/2}
\lambda^{n} \left(n! \Vert \tilde{h}_n(\cdot, t,x)\Vert^2_{\mathcal{H}^{\otimes 2}} \right)^{1/2},
\end{equation*}
which implies the desired upper bound.

\section{Appendix}

 We make use of the following fractional Gronwall's inequalities.
\begin{prop} \label{esti}\textnormal{\cite[Lemma 7.1.1]{DH81}, \cite{FLO17}}
Let $\rho>0$ and  suppose that $g(t)$ is a locally integrable function  satisfying
\begin{equation} \label{k1}
g(t)\leq c_1+k \int_0^t (t-s)^{\rho-1} g(s) d s\quad \text{for\,all} \quad  t >0,
\end{equation}
for some positive constants $c_1,k$. Then there exist positive constants $c_2,c_3$ such that
\begin{equation*}
g(t)\leq c_2 e^{c_3  k^{1/\rho} t}\quad \text{for\,all}\quad t>0.
\end{equation*}
If instead of \textnormal{(\ref{k1})} the function is non-negative and satisfies
\begin{equation*} 
g(t)\geq c_1+k \int_0^t (t-s)^{\rho-1} g(s) d s\quad \text{for\,all} \quad  t >0,
\end{equation*}
then
\begin{equation*}
g(t)\geq c_2 e^{c_3  k^{1/\rho} t}\quad \text{for\,all}\quad t>\frac{e}{\rho} (\Gamma(\rho) k)^{-1/\rho}.
\end{equation*}
\end{prop}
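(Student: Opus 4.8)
The plan is to prove Proposition~\ref{esti} by the classical Picard-iteration argument for singular (fractional) Gronwall inequalities, following Henry \cite{DH81}. Write $Tg(t)=k\int_0^t(t-s)^{\rho-1}g(s)\,ds$, so that the hypothesis reads $g\le c_1+Tg$ (resp. $g\ge c_1+Tg$). Since the kernel $(t-s)^{\rho-1}$ is nonnegative, $T$ is monotone, and iterating the inequality $N$ times gives $g(t)\le c_1\sum_{n=0}^{N-1}T^n 1(t)+T^N g(t)$, where $1$ denotes the constant function. First I would record the closed form
\begin{equation*}
T^n 1(t)=\bigl(k\,\Gamma(\rho)\bigr)^{n}\,\frac{t^{n\rho}}{\Gamma(n\rho+1)},
\end{equation*}
which follows by induction from the Beta-integral identity $\int_0^t(t-s)^{\rho-1}s^{a}\,ds=\frac{\Gamma(\rho)\Gamma(a+1)}{\Gamma(\rho+a+1)}\,t^{a+\rho}$ (equivalently, from the fact that, up to the constant $k\Gamma(\rho)$, $T$ is the Riemann--Liouville fractional integral of order $\rho$, whose $n$th power is the fractional integral of order $n\rho$, so $T^n 1(t)=(k\Gamma(\rho))^n t^{n\rho}/\Gamma(n\rho+1)$).

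Next I would dispose of the remainder. Once $N$ is large enough that $N\rho\ge1$ one has $T^Ng(t)=\frac{(k\Gamma(\rho))^{N}}{\Gamma(N\rho)}\int_0^t(t-s)^{N\rho-1}g(s)\,ds$, which is bounded in absolute value by $\frac{(k\Gamma(\rho)t^{\rho})^{N}}{t\,\Gamma(N\rho)}\int_0^t|g(s)|\,ds$; since $g$ is locally integrable and $\Gamma(N\rho)$ grows faster than any geometric sequence in $N$, this tends to $0$. Letting $N\to\infty$ in the iterated inequality then yields
\begin{equation*}
g(t)\le c_1\sum_{n\ge0}\bigl(k\Gamma(\rho)\bigr)^{n}\frac{t^{n\rho}}{\Gamma(n\rho+1)}=c_1\,E_\rho\!\bigl(k\Gamma(\rho)\,t^{\rho}\bigr),
\end{equation*}
where $E_\rho$ is the Mittag--Leffler function. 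To finish I would invoke the elementary bound $E_\rho(z)\le C(\rho)\,e^{z^{1/\rho}}$ valid for all $z\ge0$ (a consequence of the asymptotics $E_\rho(z)\sim\rho^{-1}e^{z^{1/\rho}}$ as $z\to+\infty$, or of a direct Stirling estimate on the power series), which gives $g(t)\le c_2\,e^{c_3k^{1/\rho}t}$ with $c_3=\Gamma(\rho)^{1/\rho}$ and $c_2=c_1C(\rho)$.

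For the lower bound the same iteration applies, and now the hypothesis $g\ge0$ makes every $T^n 1(t)$ as well as the remainder $T^Ng(t)$ nonnegative; dropping the remainder and letting $N\to\infty$ yields $g(t)\ge c_1\,E_\rho(k\Gamma(\rho)t^{\rho})$, hence $g(t)\ge c_1\,(k\Gamma(\rho)t^{\rho})^{n}/\Gamma(n\rho+1)$ for \emph{any} single index $n$. I would then take $n$ to be the integer part of the maximiser of the right-hand side, which sits at $n\rho\asymp(k\Gamma(\rho))^{1/\rho}t$, and apply Stirling's formula to see that this term is bounded below by $c_2e^{c_3k^{1/\rho}t}$, provided the argument $k\Gamma(\rho)t^{\rho}$ is large enough that this index is $\ge1$ and that the polynomial Stirling correction can be absorbed into the exponential; carrying the constants through this estimate is what produces the stated threshold $t>\tfrac{e}{\rho}(\Gamma(\rho)k)^{-1/\rho}$.

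The parts I expect to be routine are the iteration, the closed form for $T^n 1$, and the vanishing of the remainder. The one genuinely quantitative point — and the main obstacle — is the exponential control of the Mittag--Leffler series $\sum_n z^n/\Gamma(n\rho+1)$ with the sharp exponent $z^{1/\rho}$, together with, on the lower-bound side, the bookkeeping of Stirling constants that pins down the explicit threshold $\tfrac{e}{\rho}(\Gamma(\rho)k)^{-1/\rho}$.
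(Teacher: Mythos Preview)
The paper does not supply a proof of Proposition~\ref{esti}; it merely records the statement in the Appendix and cites \cite[Lemma 7.1.1]{DH81} and \cite{FLO17} for it. Your argument is correct and is precisely the classical one from those references: iterate the inequality, recognise the iterated kernel via the Beta integral as a Riemann--Liouville fractional integral so that $T^n 1(t)=(k\Gamma(\rho))^n t^{n\rho}/\Gamma(n\rho+1)$, let the remainder vanish by local integrability and the growth of $\Gamma(N\rho)$, and then bound the resulting Mittag--Leffler series $E_\rho(k\Gamma(\rho)t^\rho)$ above and below by exponentials, the lower bound via a single optimised term and Stirling. So there is nothing to compare: your proof \emph{is} the standard proof behind the citation, and the only place any care is needed is exactly where you flag it, namely tracking the Stirling constants to recover the explicit threshold $t>\tfrac{e}{\rho}(\Gamma(\rho)k)^{-1/\rho}$.
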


The next result shows that when $\rho=\frac12$, the lower bound can be obtained for all $t>0$.
\begin{prop} \label{estii}\textnormal{\cite{FJ14}}
Let $g(t)$ be a non-negative locally integrable function  satisfying
\begin{equation*} 
g(t)\geq c_1+k \int_0^t \frac{g(s)}{\sqrt{t-s}} d s\quad \text{for\,all} \quad  t >0,
\end{equation*}
for some positive constants $c_1,k$. Then there exist positive constants $c_2,c_3$ such that
\begin{equation*}
g(t)\geq c_2e^{c_3k^2t}\quad \text{for\,all}\quad t>0.
\end{equation*}
\end{prop}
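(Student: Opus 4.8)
The plan is to reduce this fractional Gronwall inequality to the classical (non‑fractional) one by substituting the hypothesis into itself exactly once. Write $K(t)=k\,t^{-1/2}$ for $t>0$, so that the assumption reads $g\geq c_1+K*g$ on $(0,\infty)$, where $*$ denotes convolution on $\R_+$; since $K\geq 0$, convolution against $K$ is monotone, and since $g$ is non‑negative and locally integrable, $K*g$ is again non‑negative and locally integrable.

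First I would iterate once. Substituting the bound for $g$ inside $K*g$ gives, for all $t>0$,
\[
g(t)\;\geq\;c_1+\big(K*(c_1+K*g)\big)(t)\;=\;c_1+c_1\,(K*1)(t)+\big((K*K)*g\big)(t)\;\geq\;c_1+\big((K*K)*g\big)(t),
\]
where associativity of the convolutions and the reordering of the iterated integral are legitimate by Tonelli's theorem (all integrands are non‑negative), and we have dropped the non‑negative term $c_1(K*1)(t)=2c_1 k\sqrt{t}$. The gain is that the twice‑iterated kernel is a \emph{constant}: by the Beta‑integral identity,
\[
(K*K)(t)\;=\;k^2\int_0^t (t-s)^{-1/2}s^{-1/2}\,ds\;=\;k^2\,B\!\left(\tfrac12,\tfrac12\right)\;=\;\pi k^2,
\]
independently of $t>0$. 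Hence $g$ satisfies the \emph{ordinary} integral inequality $g(t)\geq c_1+\pi k^2\int_0^t g(s)\,ds$ for all $t>0$.

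Then I would close the argument with the classical Gronwall lower bound. Setting $G(t)=\int_0^t g(s)\,ds$ (a locally absolutely continuous, non‑negative function with $G(0)=0$), the previous line gives $G'(t)=g(t)\geq c_1+\pi k^2 G(t)$ for a.e.\ $t$, hence $\frac{d}{dt}\big(e^{-\pi k^2 t}G(t)\big)\geq c_1 e^{-\pi k^2 t}$ a.e.; integrating from $0$ to $t$ yields $G(t)\geq \frac{c_1}{\pi k^2}\big(e^{\pi k^2 t}-1\big)$, and feeding this back into $g(t)\geq c_1+\pi k^2 G(t)$ gives $g(t)\geq c_1 e^{\pi k^2 t}$ for every $t>0$. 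This is the claimed bound with $c_2=c_1$ and $c_3=\pi$. Note that, in contrast with Proposition~\ref{esti} for $\rho=\tfrac12$, no lower threshold on $t$ is required: the single self‑iteration converts the fractional kernel \emph{exactly} into a genuine constant kernel, rather than merely estimating it.

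The only point requiring a little care is that, since $g$ is merely locally integrable, $K*g$ need not be finite at every point; but all the integrands appearing are non‑negative, so Tonelli's theorem legitimises the interchange $K*(K*g)=(K*K)*g$, and $(K*K)(t)=\pi k^2$ is a bona fide constant, so that $(K*K)*g(t)=\pi k^2\int_0^t g(s)\,ds$ is finite for every $t>0$; the remaining manipulations concern only the locally absolutely continuous function $G$, for which $G'=g$ a.e.\ suffices. As a remark, one could instead iterate indefinitely: the $n$‑fold convolution is $K^{*n}(t)=k^n\,\Gamma(\tfrac12)^n\,\Gamma(\tfrac n2)^{-1}t^{n/2-1}$, and dropping the non‑negative remainder $K^{*n}*g$ and letting $n\to\infty$ produces the sharper bound $g(t)\geq c_1\,E_{1/2}\!\big(\sqrt{\pi}\,k\sqrt{t}\big)$ in terms of the Mittag‑Leffler function, which is again $\asymp e^{\pi k^2 t}$ since $E_{1/2}(z)=e^{z^2}\operatorname{erfc}(-z)$; but the two‑step argument above already suffices.
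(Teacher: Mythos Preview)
Your argument is correct. The paper does not supply its own proof of this proposition: it is stated in the Appendix with a citation to \cite{FJ14} and no further justification, so there is nothing in the paper to compare your approach against beyond noting that your self--iteration trick (collapsing the half--order kernel into the constant $\pi k^2$ via $B(\tfrac12,\tfrac12)=\pi$ and then invoking the classical Gronwall lower bound) is precisely the mechanism behind the result in \cite{FJ14}. Your handling of the measure--theoretic details (Tonelli for the non--negative iterated convolutions, absolute continuity of $G$) is adequate, and the explicit constants $c_2=c_1$, $c_3=\pi$ you obtain are sharp for this method.
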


In order to achieve our goal, we use the following estimates of the Dirichlet fractional heat kernel. 
\begin{thm} \label{kernele}\textnormal{\cite[Theorem 2.2]{R13}}
Assume $\alpha=2$. There exist positive constants $C, c_1$ and $c_2$ such that for all $x, y \in D$ and $t>0$,
\begin{equation*}
\begin{split}
&C^{-1} \min\left(1, \frac{\Phi_1(x) \Phi_1(y)}{1 \wedge t} \right) e^{-\mu_1 t} \frac{e^{-c_2 \frac{\vert x-y \vert^2}{t}}}{1 \wedge t^{d/2}}\leq p_D(t,x,y)\\
&\qquad \qquad \qquad \qquad \leq C \min\left(1, \frac{\Phi_1(x) \Phi_1(y)}{1 \wedge t} \right) e^{-\mu_1 t} \frac{e^{-c_1 \frac{\vert x-y \vert^2}{t}}}{1 \wedge t^{d/2}}.
\end{split}
\end{equation*}
\end{thm}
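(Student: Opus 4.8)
The plan is to establish the two‑sided estimate by treating the long‑time regime $t\ge 1$ and the short‑time regime $0<t\le 1$ separately, in each case collapsing the displayed expression to a cleaner equivalent statement. Write $\delta(x)=\operatorname{dist}(x,\partial D)=1-|x|$; by (\ref{est}) with $\alpha=2$ one has $\Phi_1(x)\asymp\delta(x)$, and recall that $\Phi_1$ is continuous and bounded on $\overline D$.

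\textbf{Long‑time regime $t\ge1$.} Here $1\wedge t=1$ and $1\wedge t^{d/2}=1$; since $D$ is bounded, $|x-y|\le\operatorname{diam}D$, so $e^{-c|x-y|^2/t}\asymp 1$, and $\Phi_1(x)\Phi_1(y)\le\|\Phi_1\|_\infty^2$ gives $\min(1,\Phi_1(x)\Phi_1(y))\asymp\Phi_1(x)\Phi_1(y)$. Thus for $t\ge1$ the claim is equivalent to
\[
p_D(t,x,y)\asymp e^{-\mu_1 t}\,\Phi_1(x)\Phi_1(y),
\]
which is intrinsic ultracontractivity of the Dirichlet Laplacian on $D$ together with the matching lower bound. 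The spectral expansion $p_D(t,x,y)=\sum_{n\ge1}e^{-\mu_n t}\Phi_n(x)\Phi_n(y)$ has leading term exactly $e^{-\mu_1 t}\Phi_1(x)\Phi_1(y)$, and the tail is controlled by the spectral gap $\mu_2>\mu_1$, the growth $\mu_n\asymp n^{2/d}$, and a sup‑norm bound $\|\Phi_n\|_\infty\le C\mu_n^{d/4}$ (Sobolev embedding applied to (\ref{eq:e})), which already yields the equivalence on compact subsets of $D$. To make it uniform up to $\partial D$ one passes to the ground‑state ($\Phi_1$‑)transform and uses the log‑Sobolev / ultracontractivity estimate for the transformed semigroup — the standard route to intrinsic ultracontractivity on bounded $C^{1,1}$ domains — which I would cite rather than re‑derive.

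\textbf{Short‑time regime $0<t\le1$.} Now $1\wedge t=t$, $1\wedge t^{d/2}=t^{d/2}$, and using $\Phi_1\asymp\delta$ the target becomes
\[
p_D(t,x,y)\asymp \min\!\Big(1,\tfrac{\delta(x)\delta(y)}{t}\Big)\,t^{-d/2}\,e^{-c|x-y|^2/t}.
\]
The Gaussian upper bound without the boundary factor is immediate from domain monotonicity $p_D\le p_{\R^d}$. To insert $\min(1,\delta(x)\delta(y)/t)$ I would use the smoothness of $\partial D=\partial B(0,1)$: there is $r_0\in(0,\tfrac12)$ with interior and exterior ball conditions of radius $r_0$ at every boundary point, and on an $r_0$‑neighbourhood of $\partial D$ one compares $p_D$ with the explicit half‑space Dirichlet kernel $p_{\mathbb H}(t,x,y)=p_{\R^d}(t,x,y)-p_{\R^d}(t,x,\widehat y)$ on $\mathbb H=\{x_d>0\}$, which satisfies $p_{\mathbb H}(t,x,y)\asymp(1\wedge\tfrac{x_d y_d}{t})\,t^{-d/2}e^{-|x-y|^2/4t}$ because $1-e^{-x_dy_d/t}\asymp 1\wedge\tfrac{x_dy_d}{t}$; transporting this through the two ball conditions by a barrier (maximum‑principle) argument gives the boundary upper bound, and for the matching lower bound one argues probabilistically — from $x$ force the killed Brownian motion into $\{\delta\ge r_0\}$ within time $t/3$ (probability $\gtrsim 1\wedge\tfrac{\delta(x)}{\sqrt t}$, the standard survival estimate), then travel in the bulk to a point at distance $\sim r_0$ from $y$ via the interior Gaussian lower bound, then reach $y$ within $t/3$ (probability $\gtrsim 1\wedge\tfrac{\delta(y)}{\sqrt t}$), and assemble by the Markov property. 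When $\delta(x),\delta(y)\ge r_0$ the factor is $\asymp1$ for $t\le1$ and only the classical interior two‑sided Gaussian estimate is needed, obtained by Harnack chaining along a path from $x$ to $y$ staying at distance $\gtrsim\sqrt t$ from $\partial D$.

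\textbf{Main obstacle.} The decisive point is the sharp boundary behaviour for small $t$, i.e.\ producing $\min(1,\delta(x)\delta(y)/t)$ in both directions uniformly down to $\partial D$; this is exactly where the geometry enters through the interior/exterior ball conditions, and it is the part that does not extend to a general bounded domain — consistent with the remark after Theorem \ref{12} concerning Proposition \ref{yu}. The long‑time part is softer but still requires the $h$‑transform / log‑Sobolev machinery rather than the bare spectral expansion.
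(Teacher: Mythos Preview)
The paper does not prove this statement: Theorem~\ref{kernele} is stated in the Appendix as a quotation of \cite[Theorem 2.2]{R13} and is used as a black box in the proof of Proposition~\ref{yu}. There is therefore no ``paper's own proof'' to compare your proposal against.

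That said, your outline is a reasonable sketch of the standard strategy for such two-sided Dirichlet heat kernel bounds on smooth bounded domains: intrinsic ultracontractivity (via the ground-state transform and a log-Sobolev or Nash-type argument) handles $t\ge 1$, and for $0<t\le 1$ one combines interior Gaussian bounds with a boundary comparison to the half-space kernel, exploiting the uniform interior/exterior ball condition of $\partial D$. This is essentially the route taken in \cite{R13} and related works. Note, however, that what you have written is an outline rather than a proof: you explicitly defer the intrinsic ultracontractivity step to a citation, and the short-time boundary argument (the barrier comparison and the three-stage probabilistic lower bound) is described at the level of a plan, with the key uniformity in $x,y\in D$ and $t\in(0,1]$ asserted rather than established. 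If the goal were to supply a self-contained proof you would need to carry out those steps in detail; if the goal is merely to justify the statement for use in this paper, citing \cite{R13} as the paper does is the appropriate course.
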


\begin{thm} \label{kernele2} \textnormal{\cite[Theorem 1.1]{Chen}}
Assume $\alpha \in (1,2)$.
There exist a positive constant $C$ such that for all $x, y \in D$ and $t>0$,
\begin{equation*}
\begin{split}
 &C^{-1} e^{-\mu_1 t}  \big\{\min \left( 1, \frac{\Phi_1(x)}{\sqrt{t}} \right)
\min \left(1, \frac{\Phi_1(y)}{\sqrt{t}} \right) 
\min \left( t^{-d/\alpha}, \frac{t}{\vert x-y \vert^{\alpha+d}} \right) {\bf 1}_{\{t<1\}} \\ 
&\qquad \qquad \qquad +  \Phi_1(x) \Phi_1(y) {\bf 1}_{\{t \geq 1\}} \big\}\\
&\leq p_D(t,x,y)\\
&\leq Ce^{-\mu_1 t} \big\{ \min \left(1, \frac{\Phi_1(x)}{\sqrt{t}} \right)
\min \left(1, \frac{\Phi_1(y)}{\sqrt{t}} \right) 
\min \left( t^{-d/\alpha}, \frac{t}{\vert x-y \vert^{\alpha+d}} \right) {\bf 1}_{\{t<1\}} \\ 
&\qquad \qquad \qquad + \Phi_1(x) \Phi_1(y) {\bf 1}_{\{t \geq 1\}}\big\}.
\end{split}
\end{equation*}
\end{thm}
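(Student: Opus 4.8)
Since the estimate is quoted from \cite{Chen}, I only sketch the strategy one would follow to establish it. The plan is to treat the two ranges $0<t<1$ and $t\ge1$ separately, and the point is that the second reduces to the first. By the semigroup property of the killed transition density, for $t\ge1$,
\[
p_D(t,x,y)=\int_D\int_D p_D(\tfrac13,x,z)\,p_D(t-\tfrac23,z,w)\,p_D(\tfrac13,w,y)\,dz\,dw .
\]
Once the small-time bound below is available it gives $p_D(\tfrac13,x,z)\asymp\Phi_1(x)\Phi_1(z)$ (the interior factor being bounded above and below since $|x-z|\le2$ on $D$), so that $p_D(t,x,y)\asymp\Phi_1(x)\Phi_1(y)\,I(t-\tfrac23)$ with $I(s):=\int_D\int_D\Phi_1(z)\,p_D(s,z,w)\,\Phi_1(w)\,dz\,dw$. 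But $\Phi_1$ is the $L^2$-normalized ground state, so $\int_D p_D(s,z,w)\Phi_1(w)\,dw=e^{-\mu_1 s}\Phi_1(z)$ and $\int_D\Phi_1^2=1$; hence $I(s)=e^{-\mu_1 s}$ and $p_D(t,x,y)\asymp e^{-\mu_1 t}\Phi_1(x)\Phi_1(y)$ for $t\ge1$. (Intrinsic ultracontractivity of $(P^D_t)$ could also be invoked here, but it is not needed.)

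So the real content is the small-time estimate: writing $\delta_D(x)=1-|x|$ for the distance of $x$ to $\partial D$, one must show that for $t<1$,
\[
p_D(t,x,y)\asymp\left(1\wedge\tfrac{\delta_D(x)^{\alpha/2}}{\sqrt t}\right)\left(1\wedge\tfrac{\delta_D(y)^{\alpha/2}}{\sqrt t}\right)\left(t^{-d/\alpha}\wedge\tfrac{t}{|x-y|^{d+\alpha}}\right),
\]
which together with $\Phi_1(x)\asymp\delta_D(x)^{\alpha/2}$ from (\ref{est}) is the stated $t<1$ part. The classical inputs are the free-space estimate $p(t,x,y)\asymp t^{-d/\alpha}\wedge\tfrac{t}{|x-y|^{d+\alpha}}$ for the symmetric $\alpha$-stable density on $\R^d$, and the exit-time estimate $\P_x(\tau_D>t)\asymp 1\wedge\delta_D(x)^{\alpha/2}t^{-1/2}$ for small $t$, where $\tau_D$ denotes the first exit time from $D$. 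For the \emph{upper} bound, domain monotonicity $p_D\le p$ supplies the interior factor, and the two boundary factors are extracted by conditioning the trajectory at $\tau_D$ and using the exit-time estimate --- equivalently, via the boundary Harnack principle for $(-\Delta)^{\alpha/2}$ together with the parabolic Harnack inequality for the free process.

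The \emph{lower} bound is the hard part. In the ``interior'' regime, where $\delta_D(x),\delta_D(y)\gtrsim t^{1/\alpha}$ and $|x-y|$ is not too large, it follows from the parabolic Harnack inequality and a ``stay in a small ball'' estimate for the free process. Near the boundary one needs a chaining argument: starting from $x$ with $\delta_D(x)\ll t^{1/\alpha}$, the process reaches a fixed compact subset of $D$ within time of order $t$ with probability $\gtrsim\delta_D(x)^{\alpha/2}/\sqrt t$ --- here the jumps are essential, a single jump governed by the L\'evy kernel $\asymp|\cdot|^{-d-\alpha}$ producing exactly this gain --- after which the interior lower bound and a symmetric final step carry the process near $y$. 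Carrying out this chaining quantitatively, so that the weights $\delta_D(x)^{\alpha/2}/\sqrt t$ and $\delta_D(y)^{\alpha/2}/\sqrt t$ appear with the right powers and the geometric constants stay uniform over the ball, is the step I expect to be the main obstacle; it is precisely what distinguishes the argument from the Brownian case $\alpha=2$ of Theorem \ref{kernele}.

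Finally one glues the two regimes. At $t=1$ one has $1\wedge\tfrac{\delta_D(x)^{\alpha/2}}{\sqrt t}\asymp\Phi_1(x)$ (since $\Phi_1$ is bounded on $D$) and $t^{-d/\alpha}\wedge\tfrac{t}{|x-y|^{d+\alpha}}\asymp1$ (since $|x-y|\le2$), so the $t<1$ expression reduces at $t=1$ to $\Phi_1(x)\Phi_1(y)$, which matches the $t\ge1$ expression there; thus the two bounds combine into the single two-sided estimate with one constant $C$, as claimed.
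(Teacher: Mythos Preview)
The paper does not prove this theorem at all: it is stated in the Appendix purely as a quotation of \cite[Theorem 1.1]{Chen}, with no argument given. So there is no ``paper's own proof'' to compare your proposal against.

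That said, your sketch is a faithful outline of the Chen--Kim--Song strategy: the reduction of the large-time regime to the small-time one via the semigroup property and the eigenfunction identity $\int_D p_D(s,z,w)\Phi_1(w)\,dw=e^{-\mu_1 s}\Phi_1(z)$ is correct, and for $t<1$ the decomposition into the free-space interior bound, the exit-probability boundary factors, and the jump-based chaining for the lower bound near $\partial D$ accurately reflects the structure of the original argument. The identification of the near-boundary lower bound as the principal difficulty is also right. Since the present paper only \emph{uses} this estimate, your sketch already goes well beyond what is required here.
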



\end{document}